\newtheorem{thm}{Theorem}[section]
\newtheorem{lem}[thm]{Lemma}
\newtheorem{prop}[thm]{Proposition}
\theoremstyle{definition}
\newtheorem{defn}[thm]{Definition}
\newtheorem{rem}[thm]{Remark}
\newtheorem*{defn*}{Definition}
\newtheorem*{rems*}{Remarks}
\newtheorem*{rem*}{Remark}
\numberwithin{equation}{section}
\def \local-algebra {\Lambda ^0(\mathbb R^2)/(\nabla H)}
\begin{document}

\title[ Zero-dimensional symplectic ICISs] { Zero-dimensional symplectic isolated complete intersection singularities}
\author{Wojciech Domitrz}
\address{Warsaw University of Technology\\
Faculty of Mathematics and Information Science\\
Plac Politechniki 1\\
00-661 Warsaw\\
Poland\\}

\email{domitrz@mini.pw.edu.pl}
\thanks{The work of W. Domitrz was supported by Polish MNiSW grant no. N N201 397237}

\subjclass{Primary 53D05. Secondary 58K40, 58K50, 58A10, 14H20.}

\keywords{symplectic manifold, local symplectic algebra,
algebraic restrictions, relative Darboux theorem, isolated complete intersection singularities}

\begin{abstract}
We study the local symplectic algebra of the $0$-dimensional
isolated complete intersection singularities. We
use the method of algebraic restrictions to classify these symplectic
singularities. We show that there are non-trivial symplectic invariants
in this classification.
\end{abstract}

\maketitle

\section{Introduction}

The problem of symplectic classification of singular varieties was
introduced by V. I. Arnold in \cite{Ar1}. Arnold showed that the
$A_{2k}$ singularity of a planar curve (the orbit with respect to the
standard $\mathcal A$-equivalence of parameterized curves) split
into exactly $2k+1$ symplectic singularities (orbits with respect
to the symplectic equivalence of parameterized curves). He
distinguished different symplectic singularities by different
orders of tangency of the parameterized curve with the
\emph{nearest} smooth Lagrangian submanifold. Arnold posed a
problem of expressing these new symplectic invariants in terms of
the local algebra's interaction with the symplectic structure and
he proposed to call this interaction the {\bf local symplectic
algebra}. This problem was studied by many authors mainly in the case of singular curves.

In \cite{IJ1} G. Ishikawa and S. Janeczko classified symplectic singularities of curves in the
$2$-dimensional symplectic space.  A symplectic form on a $2$-dimensional manifold
is a special case of a volume form on a smooth manifold. The generalization of results
in \cite{IJ1} to volume-preserving classification of singular varieties and maps  in arbitrary dimensions was obtained in \cite{DR}.
The orbit of the action of all diffeomorphism-germs agrees with the volume-preserving orbit in the $\mathbb C$-analytic category for germs which satisfy a special weak form of quasi-homogeneity e.g. the weak quasi-homogeneity of varieties is a
quasi-homogeneity with non-negative weights $\lambda_i\ge0$ and $\sum_i \lambda_i>0$.

P. A. Kolgushkin classified  stably simple symplectic singularities of parameterized curves
in the $\mathbb C$-analytic category (\cite{K}).

In \cite{DJZ2} the local symplectic algebra of singular
quasi-homogeneous  subsets of a symplectic space was explained by
the algebraic restrictions of the symplectic form to these
subsets.  The generalization of the Darboux-Givental theorem (\cite{ArGi})
to germs of arbitrary subsets of the symplectic space obtained in \cite{DJZ2} reduces
the problem of symplectic classification of germs of quasi-homo\-ge\-neous subsets to
the problem of classification of algebraic restrictions of symplectic
forms to these subsets. For non-quasi-homogeneous subsets there is one more cohomological invariant apart of the algebraic restriction (\cite{DJZ2},
\cite{DJZ1}). The method of algebraic restrictions is a very powerful tool to study the local symplectic algebra of $1$-dimensional singular analytic varieties  since the space of algebraic restrictions of closed
$2$-forms to a $1$-dimensional singular analytic variety is finite-dimensional (\cite{D}). By this method  complete symplectic classifications of the $A-D-E$ singularities of planar curves and the $S_5$ singularity were obtained in \cite{DJZ2}. These results were generalized to other $1$-dimensional isolated complete intersection singularities: the $S_{\mu}$ symplectic singularities for $\mu>5$ in \cite{DT1}, the $T_7-T_8$ symplectic singularities in \cite{DT2} and the $W_8-W_9$ symplectic singularities in \cite{T}.

In this paper we show that some non-trivial symplectic invariants appear not only in the case of singular curves but also in the case of multiple points.  We consider the symplectic classification of the
$0$-dimensional isolated complete intersection singularities (ICISs) in the symplectic space $(\mathbb C^{2n},\omega)$. We need to introduce a symplectic $V$-equivalence to study this problem since we consider the ideals of function-germs that have not got the property of zeros.

We
recall that $\omega$ is a $\mathbb C$-analytic symplectic form on $\mathbb C^{2n}$ if $\omega$ is a $\mathbb C$-analytic
nondegenerate closed 2-form, and $\Phi:\mathbb{C}^{2n}
\rightarrow\mathbb{C}^{2n}$ is a symplectomorphism if $\Phi$ is a
$\mathbb C$-analytic diffeomorphism and $\Phi ^* \omega=\omega$.

\begin{defn} \label{symplecto}
Let $f, g :(\mathbb C^{2n},0)\rightarrow (\mathbb C^{k},0)$ be $\mathbb C$-analytic map-germs on the symplectic space $(\mathbb{C}^{2n}, \omega)$. $f, g$
are \textbf{symplectically $V$-equivalent} if there exist a symplecto\-morphism-germ $\Phi:(\mathbb{C}^{2n}, 0,\omega) \rightarrow(\mathbb{C}^{2n}, 0, \omega)$ and a $\mathbb C$-analytic map-germ $M: (\mathbb C^{2n},0)\rightarrow GL(k,\mathbb C)$ such that
such that $f \circ \Phi=M \cdot g$.
\end{defn}

If $\Phi:(\mathbb C^n,0)\rightarrow (\mathbb C^m,0)$ is a $\mathbb C$-analytic map-germ then for an ideal $I$ in the ring of $\mathbb C$-analytic function-germs on $\mathbb C^m$ we denote by $\Phi^{\ast}I$ the following ideal $\{f\circ\Phi:f\in I\}$ in the ring of $\mathbb C$-analytic function-germs on $\mathbb C^n$. The (symplectic) $V$-equivalence of map-germs $f=(f_1,\cdots,f_k), g=(g_1,\cdots,g_k):(\mathbb C^{2n},0)\rightarrow (\mathbb C^{k},0)$ corresponds to the following (symplectic) equivalence of finitely-generated ideals $<f_1,\cdots,f_k>$ and $<g_1,\cdots, g_k>$ (see \cite{AVG}).

\begin{defn}
Ideals $<f_1,\cdots,f_k>$ and $<g_1,\cdots, g_k>$ of $\mathbb C$-analytic function-germs at $0$ on the symplectic space $(\mathbb C^{2n},\omega)$ are {\bf symplectically equivalent} if there exists a symplecto\-morphism-germ $\Phi:(\mathbb{C}^{2n}, 0,\omega) \rightarrow(\mathbb{C}^{2n}, 0, \omega)$ such that $\Phi^{\ast}<f_1,\cdots,f_k>=<g_1,\cdots,g_k>$.
\end{defn}

In this paper we present the complete symplectic classification of  the $I_{a,b}$, $I_{2a+1}$, $I_{2a+4}$, $I_{a+5}$, $I^{\ast}_{10}$ singularities. For $n=1$ all $V$-orbits coincide with symplectic $V$-orbits. The situation for $n\ge 2$ is different: the $I_{a,b}$ singularities split into two symplectic $V$-orbits, the $I_{2a+1}$, $I_{2a+4}$, $I_{a+5}$ singularities split into three symplectic orbits and finally $I^{\ast}_{10}$ singularity splits into four symplectic $V$-orbits. The symplectic $V$-orbits of a map $f=(f_1,\cdots,f_{2n})$ are distinguished by the order of vanishing of a pullback of the germ of the symplectic form to a $\mathbb C$-analytic non-singular submanifold $M$ of the minimal dimension such that the ideal of $\mathbb C$-analytic function-germs vanishing $M$ is contained in the ideal $<f_1,\cdots,f_{2n}>$ (see Definition \ref{index}).

To obtain these results we need some reformulation and modification of the method of algebraic restrictions. We  present it in Section \ref{method}.
In Section \ref{discrete} we give the definitions of discrete symplectic invariants which completely distinguish symplectic $V$-singularities considered in this paper. We recall basic facts on the classification of $V$-simple maps in Section \ref{V-simple}. In Section \ref{symplICIS} we prove the symplectic $V$-classification theorem for $0$-dimensional ICISs (Theorem \ref{s-main}).

\section{The method of algebraic restrictions for the symplectic $V$-equivalence.}
\label{method}

In this section we present basic facts on the method of algebraic
restrictions adapted to the case of the symplectic $V$-equivalence.
The proofs of all results are small modifications of the proofs of analogous results in \cite{DJZ2}.

Given a germ at $0$ of a non-singular $\mathbb C$-analytic submanifold $M$ of $\mathbb C^m$ denote by $\Lambda ^p(M)$ the space of all  germs at $0$ of $\mathbb C$-analytic
differential $p$-forms on $M$. By $\mathcal O(M)$ denote the ring of $\mathbb C$-analytic function-germs on $M$ at $0$. Given an ideal  $I$ in $\mathcal O(M)$
introduce the following subspace of $\Lambda ^p(M)$:
$$\mathcal A^p_0(I, M) = \{\alpha  + d\beta : \ \ \alpha
\in I\Lambda^p(M), \ \beta \in I\Lambda^{p-1}(M).\}$$ The
relation $\omega\in I\Lambda^p(M)$ means that $\omega=\sum_{i=1}^k f_i \alpha_i$,
where $\alpha_i\in \Lambda^p(M)$ and $f_i\in I$ for $i=1,...,k$.

\smallskip

\begin{defn}
\label{main-def} Let $I$ be an ideal of $\mathcal O(M)$ and let
$\omega \in \Lambda ^p(M)$. The {\bf algebraic restriction} of
$\omega $ to $I$ is the equivalence class of $\omega $ in $\Lambda
^p(M)$, where the equivalence is as follows: $\omega $ is
equivalent to $\widetilde \omega $ if $\omega - \widetilde \omega
\in \mathcal A^p_0(I,M)$.
\end{defn}

\noindent {\bf Notation}. The algebraic restriction of the germ of
a $p$-form $\omega $ on $M$ to the ideal $I$ in $\mathcal O(M)$
will be denoted by $[\omega ]_I$. Writing $[\omega ]_I=0$ (or
saying that $\omega $ has zero algebraic restriction to $I$) we
mean that $[\omega ]_I = [0]_I$, i.e. $\omega \in A^p_0(I,M)$.

\begin{defn}Two algebraic restrictions
$[\omega ]_I$ and $[\widetilde \omega ]_{\widetilde I}$ are called {\bf
diffeomorphic} if there exists the germ of a diffeomorphism $\Phi:
 M\rightarrow \widetilde M$ such that $\Phi^{\ast}(\widetilde I)=I$ and  $[\Phi^{\ast}\widetilde \omega ]_I=[\omega ]_I$.
\end{defn}

\begin{defn}
The germ of a function, a differential $k$-form, or a vector field
$\alpha$ on $(\mathbb C^m,0)$ is {\bf quasi-homogeneous} in a
coordinate system $(x_1,\cdots,x_m)$ on $(\mathbb C^m,0)$ with
positive integer weights $(\lambda_1,\cdots, \lambda_m)$ if $\mathcal L_E
\alpha=\delta \alpha$, where $E=\sum_{i=1}^m\lambda_i
x_i\frac{\partial}{\partial x_i}$ is the germ of the {\bf Euler vector
field} on $(\mathbb C^m,0)$ and the integer $\delta$ is called
the quasi-degree.
\end{defn}

 It is easy to show that $\alpha$ is quasi-homogeneous in a coordinate system
 $(x_1,\cdots,x_m)$ with weights $(\lambda_1,\cdots, \lambda_m)$ if and only if
 $F_t^{\ast}\alpha=t^{\delta}\alpha$, where
 \begin{equation}\label{Ft}
 F_t(x_1,\cdots,x_m)=(t^{\lambda_1} x_1,\cdots, t^{\lambda_m}x_m).
 \end{equation}

\begin{defn}
A finitely generated ideal $I$ of $\mathcal O(\mathbb C^m)$ is {\bf quasi-homogeneous} if there exist generators of $I$ which are quasi-homogeneous in  the same coordinate system $(x_1,\cdots,x_m)$ on $\mathbb C^m$ with the same positive integer weights $(\lambda_1,\cdots, \lambda_m)$.

A map-germ $f=(f_1,\cdots,f_k):(\mathbb C^{m},0)\rightarrow (\mathbb C^{k},0)$ is {\bf quasi-homogeneous} if function-germs $f_1,\cdots,f_k$ are quasi-homogeneous in  the same coordinate system $(x_1,\cdots,x_m)$ on $\mathbb C^m$ with the same positive integer weights $(\lambda_1,\cdots, \lambda_m)$.
\end{defn}

To prove the generalization of Darboux-Givental theorem suitable for the symplectic $V$-equivalence of maps or the symplectic equivalence of ideals of function-germs we need the following version of the Relative Poincar\'{e} Lemma.

\begin{lem}\label{Poincare}
Let $I$ be a finitely generated quasi-homogeneous ideal in $\mathcal O(\mathbb C^m)$.
If $\omega\in I\Lambda^p(\mathbb C^m)$ is closed than there exists $\alpha \in I\Lambda^{p-1}(\mathbb C^m)$ such that $\omega=d\alpha$.
\end{lem}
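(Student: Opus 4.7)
The plan is to build the primitive $\alpha$ explicitly via a homotopy based on the Euler vector field. Since $I$ is quasi-homogeneous with finite generating set $g_1,\dots,g_k$ of positive quasi-degrees $d_1,\dots,d_k$ in coordinates $(x_1,\dots,x_m)$ with positive integer weights $(\lambda_1,\dots,\lambda_m)$, the family $F_t$ of (\ref{Ft}) satisfies $F_0 = 0$, $F_1 = \mathrm{id}$, and $F_t^*g_i = t^{d_i}g_i$. With $E = \sum_i \lambda_i x_i\,\partial_{x_i}$ the Euler field, I set
\[
\alpha \;=\; \int_0^1 \frac{1}{t}\,F_t^*\bigl(i_E\omega\bigr)\,dt .
\]

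First I would verify $d\alpha = \omega$. A direct computation gives $\dot F_t(x) = \frac{1}{t}E(F_t(x))$, hence $\frac{d}{dt}F_t^*\omega = \frac{1}{t}F_t^*\mathcal L_E\omega = \frac{1}{t}F_t^*(d\,i_E\omega + i_E\,d\omega)$, which reduces to $d\bigl(\frac{1}{t}F_t^*i_E\omega\bigr)$ because $d\omega = 0$. Integrating from $0$ to $1$ yields $F_1^*\omega - F_0^*\omega = d\alpha$. Since every element of $I$ vanishes at $0$ (the generators $g_i$ do, having positive quasi-degree) the form $\omega$ vanishes at $0$; combined with the constancy of $F_0$ this gives $F_0^*\omega = 0$, so $\omega = d\alpha$.

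The main obstacle is to show that $\alpha$ actually lies in $I\Lambda^{p-1}(\mathbb C^m)$, and is not merely a formal expression. Writing $i_E\omega = \sum_{i=1}^k g_i\,\gamma_i$ with $\gamma_i\in\Lambda^{p-1}(\mathbb C^m)$ and pulling back by $F_t$ gives
\[
F_t^*(i_E\omega) = \sum_{i=1}^k t^{d_i}\,g_i\,F_t^*\gamma_i, \qquad \alpha = \sum_{i=1}^k g_i\int_0^1 t^{d_i-1}\,F_t^*\gamma_i\,dt .
\]
The strict positivity $d_i\geq 1$ exactly cancels the apparent pole at $t=0$, each integrand $t^{d_i-1}F_t^*\gamma_i$ is jointly analytic in $(t,x)$ on a neighborhood of $[0,1]\times\{0\}$, and the integrals define analytic germs $\tilde\gamma_i\in\Lambda^{p-1}(\mathbb C^m)$. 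Hence $\alpha = \sum_i g_i\tilde\gamma_i\in I\Lambda^{p-1}(\mathbb C^m)$, as required. It is precisely the combination of finite generation of $I$ with the strict positivity of the quasi-degrees that lets the otherwise infinite quasi-homogeneous expansion collapse into a finite combination of the chosen generators with genuinely analytic coefficients.
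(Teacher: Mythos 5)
Your proof is correct and is essentially the paper's own argument: the paper also integrates the homotopy $F_t$ of (\ref{Ft}), writes $\alpha=\int_0^1F_t^{\ast}(V_t\rfloor\omega)\,dt$ with $V_t\circ F_t=F_t'$ (which is exactly your $\tfrac1t E$), and extracts the factors $g_i$ from the integral using $F_t^{\ast}g_i=t^{d_i}g_i$ to land in $I\Lambda^{p-1}$. Your writeup is in fact slightly more careful than the paper's in making explicit that the exponent $d_i-1\ge 0$ removes the apparent singularity at $t=0$.
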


\begin{proof} We use the method described in \cite{DJZ1}. We can find a coordinate system $(x_1,\cdots,x_m)$ on $(\mathbb C^m,0)$ and positive integer weights $(\lambda_1,\cdots,\lambda_m)$ and quasi-homogeneous function-germs $f_1,\cdots,f_k\in \mathcal O(\mathbb C^m)$ (in this coordinate systems with these weights) such that $I=<f_1,\cdots,f_k>$. Let $\delta_i$ be a quasi-degree of $f_i$ for $i=1,\cdots,k$.

Let $F_t$ be a map defined in (\ref{Ft}) and let $V_t$ be a
vector field along $F_t$ for $t\in[0;1]$ such that $V_t\circ F_t=F_t^{\prime}$.

Then we have  $F_0^{\ast}\omega =0$ and it implies that
$$\omega  = F_1^{\ast}\omega -F_0^{\ast}\omega  =\int_0^1(F_t^{\ast}
\omega )^\prime
dt=\int_0^1F_t^{\ast}d(V_t\rfloor \omega )dt=d\left(\int_0^1F_t^{\ast}(V_t\rfloor \omega )dt\right).$$
Let $\alpha=\int_0^1 F_t^{\ast}(V_t\rfloor \omega )dt$, then $\omega=d\alpha$. But $\omega$ belongs to $I\Lambda^p(\mathbb C^m)$. It implies that there exist germs of $p$-forms $\beta_i$ in $\Lambda^p(\mathbb C^m)$ for $i=1,\cdots,k$ such that $\omega=\sum_{i=1}^k f_i\beta_i$.
So we have that
$$
\alpha=\int_0^1 F_t^{\ast}(V_t\rfloor \sum_{i=1}^k f_i\beta_i )dt=\sum_{i=1}^k f_i \int_0^1 t^{\delta_i} F_t^{\ast}(V_t\rfloor \beta_i)dt.
$$
Thus $\alpha$ belongs to $I\Lambda^{p-1}(\mathbb C^m)$.
\end{proof}

The method of algebraic restrictions applied to finitely-generated
quasi-homogeneous ideals is based on the following theorem.

\begin{thm}[a modification of Theorem A in \cite{DJZ2}] \label{thm A}
Let $I$ be a finitely gene\-rated quasi-homoge\-neous ideal in  $\mathcal O(\mathbb
C^{2n})$.
\begin{enumerate}
\item
If $\omega _0, \omega _1$ are germs at $0$ of symplectic forms
on $\mathbb C^{2n}$ with the same algebraic restriction to $I$ then
there exists a $\mathbb C$-analytic diffeomorphism-germ $\Phi $  of $\mathbb C^{2n}$ at $0$ of the form $\Phi (x)=(x_1+\phi_1(x),\cdots,x_{2n}+\phi_{2n}(x))$, where $\phi_i\in I$ for $i=1,\cdots,2n$, such that $\Phi ^*\omega _1 = \omega _0$.
\item
$\mathbb C$-analytic quasi-homogeneous map-germs $f=(f_1,\cdots,f_k), g=(g_1,\cdots,g_k) :(\mathbb C^{2n},0)\rightarrow (\mathbb C^{k},0)$ on the symplectic space $(\mathbb{C}^{2n}, \omega)$ are symplectically $V$-equivalent if and only if algebraic restrictions $[\omega]_{<f_1,\cdots,f_k>}$ and $[\omega]_{<g_1,\cdots,g_k>}$ are diffeomorphic.
\end{enumerate}
\end{thm}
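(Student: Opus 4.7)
The plan is to establish (1) via Moser's homotopy method, powered by Lemma \ref{Poincare}, and then obtain (2) by combining (1) with the diffeomorphism witnessing the equivalence of algebraic restrictions.

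For (1), the hypothesis $[\omega_0]_I=[\omega_1]_I$ unfolds to $\omega_1-\omega_0=\alpha+d\beta$ with $\alpha\in I\Lambda^2(\mathbb C^{2n})$ and $\beta\in I\Lambda^1(\mathbb C^{2n})$. Since $\omega_0$, $\omega_1$, and $d\beta$ are closed, so is $\alpha$, and Lemma \ref{Poincare} produces $\tilde\gamma\in I\Lambda^1(\mathbb C^{2n})$ with $d\tilde\gamma=\alpha$; setting $\gamma:=\tilde\gamma+\beta\in I\Lambda^1(\mathbb C^{2n})$ gives $\omega_1-\omega_0=d\gamma$. I then run Moser's trick along a suitable path $\omega_t$ from $\omega_0$ to $\omega_1$ of the shape $\omega_t-\omega_0=d(h(t)\gamma)$ for some analytic $h\colon[0,1]\to\mathbb C$ with $h(0)=0$, $h(1)=1$; this path can be chosen so that $\omega_t$ stays nondegenerate on a uniform neighbourhood of $0$ for $t\in[0,1]$ (the space of nondegenerate complex skew $2$-forms on $T_0\mathbb C^{2n}$ is path-connected). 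The equation $V_t\rfloor\omega_t=-h'(t)\gamma$ then has a unique time-dependent solution $V_t$. Picking quasi-homogeneous generators $f_1,\dots,f_k$ of $I$ and writing $\gamma=\sum_j f_j\gamma_j$ forces $V_t=\sum_j f_j W_{t,j}$, so every coefficient of $V_t$ lies in $I$. Cartan's identity yields $\frac{d}{dt}(\Phi_t^*\omega_t)=\Phi_t^*(d(V_t\rfloor\omega_t)+\dot\omega_t)=0$, and the time-$1$ flow $\Phi$ of $V_t$ satisfies $\Phi^*\omega_1=\omega_0$.

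To verify that $\Phi$ has the announced form, observe that $g_j(s,x):=f_j(\Phi_s(x))$ satisfies a linear analytic ODE in $s$ with initial value $f_j(x)$, which yields $f_j\circ\Phi_s=\sum_l M_{jl}(s,\cdot)f_l\in I$, whence $\Phi_s^*I=I$. Substituting this into $\Phi_t^i(x)-x_i=\int_0^t v^i_s(\Phi_s(x))\,ds$, where $v^i_s\in I$ are the components of $V_s$, rewrites each $\Phi_t^i-x_i$ as an $\mathcal O(\mathbb C^{2n})$-linear combination of the generators $f_l$, as required. Part (2) now follows directly. The forward direction is immediate: if $f\circ\Phi_0=M\cdot g$ with $\Phi_0$ a symplectomorphism and $M\in GL(k,\mathbb C)$, then $\Phi_0^*\omega=\omega$ and $\Phi_0^*\langle f_1,\dots,f_k\rangle=\langle g_1,\dots,g_k\rangle$, so $\Phi_0$ itself realises the required diffeomorphism of algebraic restrictions. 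For the converse, given $\Psi$ with $\Psi^*\langle f\rangle=\langle g\rangle$ and $[\Psi^*\omega]_{\langle g\rangle}=[\omega]_{\langle g\rangle}$, apply (1) with $\omega_0=\omega$, $\omega_1=\Psi^*\omega$, and $I=\langle g\rangle$ to obtain $\Phi$ with $\Phi^*\Psi^*\omega=\omega$ and $\Phi^*\langle g\rangle=\langle g\rangle$; then $\Psi\circ\Phi$ is a symplectomorphism satisfying $(\Psi\circ\Phi)^*\langle f\rangle=\langle g\rangle$, i.e.\ $f$ and $g$ are symplectically $V$-equivalent.

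The principal difficulty is the Moser step in (1), specifically the two ideal-tracking statements: that the family $\omega_t$ can be kept nondegenerate on a uniform neighbourhood of $0$ (handled by the clever choice of $h$, or equivalently by a rescaling via the quasi-homogeneous flow $F_s$), and that the flow of a vector field with coefficients in $I$ is a diffeomorphism of the announced $I$-adic form. Once these are in place, Cartan's identity together with Lemma \ref{Poincare} completes (1), and (2) is a formal composition.
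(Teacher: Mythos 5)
Your proof is correct and takes essentially the same route as the paper, which simply defers to the Moser-homotopy proof of Theorem A in \cite{DJZ2} supplemented by the two new ingredients you also use: Lemma \ref{Poincare} to write $\omega_1-\omega_0=d\gamma$ with $\gamma\in I\Lambda^1$, and a lemma asserting that the flow of a time-dependent vector field with coefficients in $I$ has the form $x\mapsto(x_1+\phi_1(x),\dots,x_{2n}+\phi_{2n}(x))$ with $\phi_i\in I$. The only cosmetic difference is that you prove this last fact via the linear ODE satisfied by $f_j\circ\Phi_s$ and variation of constants, whereas the paper obtains it by Picard iteration.
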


\begin{rem}
It is obvious that if $\Phi (x)=(x_1+\phi_1(x),\cdots,x_{2n}+\phi_{2n}(x))$ where $\phi_i\in I$ for $i=1,\cdots,2n$ then $\Phi ^*I = I$
\end{rem}

A proof of Theorem \ref{thm A} can be obtain by a small modification of the proof of Theorem A in \cite{DJZ2}. One only needs Lemma \ref{Poincare} and the following fact.

\begin{lem}
Let $I$ be a finitely generated ideal in $\mathcal O(\mathbb C^m)$. Let $X_t=\sum_{i=1}^m f_{i,t} \frac{\partial}{\partial x_i}$ for $t\in [0;1]$ be a family of germs of $\mathbb C$-analytic vector fields on $\mathbb C^m$ such that $f_{i,t}\in I$ for $i=1,\cdots,m$.

If $\Phi_t$ for $t\in [0,1]$ is a family of diffeomorphism-germs of $(\mathbb C^m,0)$ such that
\begin{equation}\label{ODE}
\frac{d}{dt}\Phi_t=X_t\circ \Phi_t
\end{equation}
 then
\begin{equation}\label{inIdeal}
\Phi_t (x)=(x_1+\phi_{1,t}(x),\cdots,x_{2n}+\phi_{2n,t}(x)),
\end{equation}
where $\phi_{i,t}\in I$ for $i=1,\cdots,2n$.
\end{lem}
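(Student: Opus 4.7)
The plan is to reduce the claim to a linear ODE on the compositions of a finite set of generators of $I$ with the flow $\Phi_t$. Fix $\mathbb C$-analytic generators $g_1,\dots,g_k$ of $I$. Since $f_{i,t}\in I$, there exist $\mathbb C$-analytic coefficients $a_{i\ell,t}$ (inheriting the regularity in $t$ of $f_{i,t}$) such that $f_{i,t}=\sum_{\ell=1}^{k} a_{i\ell,t}\,g_\ell$. The key step is to show that $g_j\circ\Phi_t$ remains in $I$ for every $t\in[0,1]$; once this is established, integrating the flow equation yields the conclusion.

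Set $G_j(t,x):=g_j(\Phi_t(x))$ and differentiate along the flow using the chain rule together with (\ref{ODE}):
\[
\frac{\partial G_j}{\partial t}(t,x)=\sum_{i=1}^{m}\frac{\partial g_j}{\partial x_i}(\Phi_t(x))\,f_{i,t}(\Phi_t(x))=\sum_{\ell=1}^{k} M_{j\ell}(t,x)\,G_\ell(t,x),
\]
where $M_{j\ell}(t,x):=\sum_{i=1}^{m}\frac{\partial g_j}{\partial x_i}(\Phi_t(x))\,a_{i\ell,t}(\Phi_t(x))$ is $\mathbb C$-analytic in $x$ on a common neighbourhood of $0$ for all $t\in[0,1]$. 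This is a linear system with initial condition $G_j(0,x)=g_j(x)$, so its fundamental solution $A(t,x)=[A_{j\ell}(t,x)]$ is $\mathbb C$-analytic in $x$ and yields
\[
g_j(\Phi_t(x))=\sum_{\ell=1}^{k} A_{j\ell}(t,x)\,g_\ell(x)\in I.
\]

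Using this expansion, integrate (\ref{ODE}) componentwise:
\[
\phi_{i,t}(x)=\Phi_t^{\,i}(x)-x_i=\int_0^t f_{i,s}(\Phi_s(x))\,ds=\sum_{\ell,m}\int_0^t a_{i\ell,s}(\Phi_s(x))\,A_{\ell m}(s,x)\,ds\cdot g_m(x).
\]
Since the integrand is $\mathbb C$-analytic in $x$ and continuous in $s$, the coefficient of each $g_m(x)$ is $\mathbb C$-analytic in $x$, so $\phi_{i,t}\in I$ as claimed, giving (\ref{inIdeal}).

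I expect the only real technical point to be the analytic dependence on parameters of the fundamental solution $A(t,x)$: one needs a neighbourhood of $0$ on which the linear system has a well-defined analytic solution uniformly for $t\in[0,1]$. This is handled by the standard analytic dependence theorem for ODEs combined with the fact that, being linear with coefficients $\mathbb C$-analytic in $x$ and continuous in $t$ on a compact interval, the local analytic solution automatically extends to all of $[0,1]$ without shrinking the $x$-domain. Everything else is a bookkeeping computation.
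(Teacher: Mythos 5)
Your proof is correct, but it takes a genuinely different route from the paper. The paper's argument is a Picard-iteration sketch: starting from $\Psi(t,x)\equiv x$, it observes that the Picard operator $(T\Psi)(t,x)=x+\int_0^tX_s(\Psi(s,x))\,ds$ preserves the form ``identity plus components in $I$,'' and then passes to the limit, invoking finite generation of $I$ to conclude that the limit $\Phi_t$ retains this form. You instead linearize along the flow: you derive the linear system $\partial_t G_j=\sum_\ell M_{j\ell}G_\ell$ for $G_j=g_j\circ\Phi_t$, use its fundamental matrix to get the explicit membership $g_j\circ\Phi_t=\sum_\ell A_{j\ell}(t,\cdot)\,g_\ell\in I$ (i.e.\ you prove $\Phi_t^{\ast}I\subseteq I$ outright), and then integrate the flow equation to exhibit $\phi_{i,t}$ as an $\mathcal O$-combination of the generators. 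Your approach buys a cleaner justification of the limit step: the paper's ``the ideal $I$ is finitely generated, thus $\Phi_t$ has also this form'' silently uses that finitely generated ideals of $\mathcal O(\mathbb C^m)$ are closed under the relevant convergence of the Picard iterates, whereas you never take a limit of elements of $I$ at all. The paper's approach is shorter and avoids introducing the fundamental solution. Both arguments share the same mild technical debt, which you correctly flag: one must choose the coefficients $a_{i\ell,t}$ in $f_{i,t}=\sum_\ell a_{i\ell,t}g_\ell$ with enough regularity in $t$ (continuity suffices) for the integrals over $[0,t]$ to make sense and produce coefficients analytic in $x$; this is standard but worth a sentence. (Note also that the $2n$'s in the statement of the lemma should read $m$; this is a typo in the paper, not an issue with your argument.)
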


\begin{proof}[A sketch of the proof]
The map $t\mapsto \Phi_t(x)$ is a solution of ODE $\frac{dy}{dt}=X_t(y)$ with the initial condition $y(0)=x$. So $\Phi_t(x)$ can be obtained as a limit $\lim_{n\to \infty}T^n\Psi$ where $\Psi(t,x)\equiv x$ and $(T\Psi)(t,x)=x+\int_0^tX_s(\Psi(s,x))ds$ is the Picard's operator.  It is easy to see that if $\Psi$ has the form  (\ref{inIdeal}) then $T\Psi$ has the form (\ref{inIdeal}) too. The ideal $I$ is finitely generated. Thus $\Phi_t$ has also this form.
\end{proof}

Theorem \ref{thm A} reduces the problem of symplectic
classification of  quasi-homogeneous ideals to
the problem of  classification of the algebraic
restrictions of the germ of the symplectic form to
quasi-homogeneous ideals.

The meaning of the zero algebraic restriction is explained
by the following theorem.

\begin{thm}[a modification of Theorem {\bf B} in \cite{DJZ2}] \label{thm B}  A finitely generated quasi-homogeneous
 ideal $I$  of $\mathcal O(\mathbb C^{2n})$ contains the ideal of $\mathbb C$-analytic function-germs vanishing on the germ of a non-singular Lagrangian submanifold of the symplectic space
$(\mathbb C^{2n}, \omega )$  if and only if the symplectic form $\omega$ has zero algebraic restriction to $I$.
\end{thm}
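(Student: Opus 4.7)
The plan has two directions with rather different flavors.

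For ($\Leftarrow$), I invoke Weinstein's Lagrangian neighborhood theorem to get Darboux coordinates adapted to $L$: if $I\supseteq I(L)$, choose coordinates $(x_1,\dots,x_n,y_1,\dots,y_n)$ in which $\omega=\sum_i dx_i\wedge dy_i$ and $L=\{y_1=\dots=y_n=0\}$, so that $I(L)=\langle y_1,\dots,y_n\rangle$. Then $\omega=-d\bigl(\sum_i y_i\,dx_i\bigr)$, and $-\sum_i y_i\,dx_i\in I(L)\Lambda^1\subseteq I\Lambda^1$, hence $\omega\in\mathcal A^2_0(I,\mathbb C^{2n})$, i.e. $[\omega]_I=0$.

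For ($\Rightarrow$), the strategy is to first do linear symplectic algebra at $0$ to locate a Lagrangian subspace whose conormal sits inside the linear part of $I$, and then use Theorem \ref{thm A}(1) to globalize. Write $\omega=\alpha+d\beta$ with $\alpha\in I\Lambda^2$ and $\beta=\sum_j g_j\eta_j$, $g_j\in I$; evaluating at $0$ and using $I\subseteq\mathfrak m$ gives $\omega(0)=\sum_j dg_j(0)\wedge\eta_j(0)$, so $\omega(0)$ lies in $W\wedge T_0^*\mathbb C^{2n}$, where $W:=\Span\{df(0):f\in I\}\subseteq T_0^*\mathbb C^{2n}$. Pairing with vectors in the annihilator $W^0\subseteq T_0\mathbb C^{2n}$ shows $\omega(0)(v,\cdot)\in W$ for every $v\in W^0$, so $W$ is coisotropic for the induced symplectic form on $T_0^*\mathbb C^{2n}$; in particular $\dim W\ge n$, and $W$ contains a Lagrangian subspace $W_0$. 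Choose $h_1,\dots,h_n\in I$ with $dh_1(0),\dots,dh_n(0)$ a basis of $W_0$, and extend to local coordinates $(x_1,\dots,x_n,y_1,\dots,y_n)$ with $y_i=h_i$. Because $W_0$ is Lagrangian, $\Span(\partial_{x_i}|_0)$ is Lagrangian in $T_0\mathbb C^{2n}$, and a linear substitution among the $x$-variables (together with a $y$-linear shift absorbing the residual $dy\wedge dy$ part) brings $\omega(0)$ to the standard form $\sum_i dx_i\wedge dy_i$.

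Set $\omega_0:=\sum_i dx_i\wedge dy_i$, the constant symplectic form in these coordinates. The ($\Leftarrow$) argument applied to $L_0=\{y=0\}$ gives $[\omega_0]_I=0=[\omega]_I$, so Theorem \ref{thm A}(1) produces a diffeomorphism $\Phi(x)=x+\phi(x)$ with each $\phi_i\in I$ and $\Phi^*\omega=\omega_0$. Put $L:=\Phi(L_0)$: it is non-singular, Lagrangian for $\omega$ (as $L_0$ is Lagrangian for $\Phi^*\omega=\omega_0$), and the Remark gives $\Phi^*I=I$, so from $I(L_0)=\langle y_1,\dots,y_n\rangle\subseteq I$ we conclude $I(L)\subseteq I$. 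I expect the main obstacle to be showing the coisotropicity of $W$ in the first step of ($\Rightarrow$): this is the genuine linear-algebra content of $[\omega]_I=0$ combined with the non-degeneracy of $\omega$, and it is what guarantees there is room to place a Lagrangian inside $W$. Everything afterward is the implicit function theorem, elementary symplectic linear algebra, or an appeal to Theorem \ref{thm A}.
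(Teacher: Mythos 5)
Your proposal is correct and takes essentially the route that the paper itself only sketches by deferring to the proof of Theorem B in \cite{DJZ2}: the easy implication via an $I(L)$-valued primitive of $\omega$ in adapted Darboux coordinates, and the converse by using $[\omega]_I=0$ to show that $\Span\{df(0):f\in I\}$ is coisotropic at $0$, extracting a Lagrangian plane cut out by elements of $I$, and then transporting the coordinate Lagrangian $\{y=0\}$ by the diffeomorphism supplied by Theorem \ref{thm A}(1) together with the Remark that $\Phi^*I=I$. Since the paper gives no written proof of this statement, your argument in fact supplies the omitted details, and I see no gaps (the only point worth a word is the harmless degenerate case $I=\mathcal O(\mathbb C^{2n})$, needed so that $I\subseteq\mathfrak m$ in the evaluation at $0$).
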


We now formulate the modifications of  basic properties of
algebraic restrictions (\cite{DJZ2}). First we can reduce the dimension of the manifold  due
to the following propositions.

If the ideal $I$ in $\mathcal O(\mathbb C^m)$  contains an ideal $I(M)$ of function-germs vanishing on a
non-singular submanifold $M\subset \mathbb C^m$ then the
classification of the algebraic restrictions to $I$ of $p$-forms on
$\mathbb C^m$ reduces to the classification of the algebraic
restrictions to $I|_M=\{f|_M:f\in I\}$ of $p$-forms on $M$. At first note that the
algebraic restrictions $[\omega ]_I$ and $[\omega \vert
_{TM}]_{I|_M}$ can be identified:

\begin{prop}
\label{reduction} Let $I$ be an ideal in $\mathcal O(\mathbb C^m)$ which contains an ideal of function-germs vanishing on a
non-singular submanifold $M\subset \mathbb C^m$ and let $\omega _1, \omega _2$ be germs of $p$-forms on
$\mathbb C^m$. Then $[\omega _1]_I = [\omega _2]_I$ if and only if
$\bigl[\omega _1\vert _{TM}\bigr]_{I|_M} = \bigl[\omega _2 \vert
_{TM}\bigr]_{I|_M}$.
\end{prop}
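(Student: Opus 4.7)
The plan is to establish the two implications separately, with the forward direction nearly tautological and the reverse direction reduced to a standard Hadamard-type lemma. Throughout let $\iota: M \hookrightarrow \mathbb C^m$ denote the inclusion and let $I(M)$ be the ideal of function-germs vanishing on $M$, so that by hypothesis $I(M) \subset I$.

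For the forward direction I would take a decomposition $\omega_1 - \omega_2 = \alpha + d\beta$ with $\alpha \in I\Lambda^p(\mathbb C^m)$ and $\beta \in I\Lambda^{p-1}(\mathbb C^m)$, and apply $\iota^*$. Since $\iota^* d = d \iota^*$, and writing $\alpha = \sum_i f_i \gamma_i$ with $f_i \in I$ yields $\iota^*\alpha = \sum_i (f_i|_M)\iota^*\gamma_i \in (I|_M)\Lambda^p(M)$ (and similarly for $\beta$), we conclude that $\omega_1|_{TM} - \omega_2|_{TM} \in \mathcal A_0^p(I|_M, M)$.

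For the reverse direction I would fix adapted local coordinates $(x_1, \ldots, x_k, y_1, \ldots, y_{m-k})$ with $M = \{y = 0\}$, and let $\pi: (x,y) \mapsto (x, 0)$ be the natural projection to $M$. Given a decomposition $\omega_1|_{TM} - \omega_2|_{TM} = \tilde\alpha + d\tilde\beta$ with $\tilde\alpha = \sum_i (f_i|_M)\tilde\gamma_i$ and $\tilde\beta = \sum_j (g_j|_M)\tilde\delta_j$ (the $f_i, g_j$ in $I$), I would lift these to $\mathbb C^m$ by setting $\alpha := \sum_i f_i \, \pi^*\tilde\gamma_i \in I\Lambda^p(\mathbb C^m)$ and $\beta := \sum_j g_j \, \pi^*\tilde\delta_j \in I\Lambda^{p-1}(\mathbb C^m)$. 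Since $\iota^*\pi^* = \mathrm{id}$, the residual form $\eta := \omega_1 - \omega_2 - \alpha - d\beta$ satisfies $\iota^*\eta = 0$.

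The only step requiring real work is the auxiliary lemma that any $p$-form $\eta$ on $\mathbb C^m$ with $\iota^*\eta = 0$ lies in $\mathcal A_0^p(I(M), \mathbb C^m) \subset \mathcal A_0^p(I, \mathbb C^m)$. Writing $\eta = \sum_{K=(K_1,K_2)} a_K(x,y)\, dx_{K_1}\wedge dy_{K_2}$, the terms with $K_2 = \emptyset$ have coefficients satisfying $a_{K_1}(x,0) = 0$, so Hadamard's lemma places them in $I(M) = \langle y_1, \ldots, y_{m-k}\rangle$; the remaining terms contain a factor $dy_i$ and are rewritten via $dy_i \wedge \mu_i = d(y_i \mu_i) - y_i\, d\mu_i$, placing them in $dI(M)\Lambda^{p-1}(\mathbb C^m) + I(M)\Lambda^p(\mathbb C^m)$. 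This yields $\eta \in \mathcal A_0^p(I, \mathbb C^m)$, hence $[\omega_1]_I = [\omega_2]_I$. The main conceptual point is recognizing that the proof splits into first lifting the decomposition from $M$ via the projection $\pi$, and then absorbing the coordinate-normal error via this Hadamard argument; there is no serious technical obstacle beyond the choice of adapted coordinates.
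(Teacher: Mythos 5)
Your proof is correct and complete; both directions are handled properly, and the Hadamard-lemma step showing that a form vanishing on $TM$ lies in $\mathcal A_0^p(I(M),\mathbb C^m)$ is exactly the point that makes the reverse implication work. The paper itself states this proposition without proof, deferring to the analogous result in [DJZ2], and your argument (pull back by the inclusion for one direction; lift by the projection in adapted coordinates and absorb the residual form for the other) is essentially the standard one used there.
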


The following, less obvious statement, means that the {\it orbits}
 of the algebraic restrictions $[\omega ]_I$ and $[\omega \vert
_{TM}]_{I|_M}$ also can be identified.

\begin{prop}
\label{main-reduction} Let $I_1,I_2$ be ideals in the ring
$\mathcal O(\mathbb C^m)$, which contain $I(M_1)$ and $I(M_2)$ respectively, where $M_1, M_2$ are  equal-dimensional non-singular
submanifolds. Let $\omega _1, \omega _2$
be two germs of $p$-forms. The algebraic restrictions $[\omega
_1]_{I_1}$ and $[\omega _2]_{I_2}$ are diffeomorphic if and only
if the algebraic restrictions $\bigl[\omega _1\vert
_{TM_1}\bigr]_{I_1|_{M_1}}$ and $\bigl[\omega _2\vert
_{TM_2}\bigr]_{I_2|_{M_2}}$ are diffeomorphic.
\end{prop}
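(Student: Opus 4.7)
The plan is to prove both implications by transferring between diffeomorphism-germs of $(\mathbb{C}^m,0)$ and diffeomorphisms $\phi:M_1\to M_2$, and then invoking Proposition \ref{reduction} to translate the algebraic restriction equality between the ambient and restricted settings. A key elementary fact I would establish first: if two ideals $J_1,J_2\subseteq\mathcal O(\mathbb{C}^m)$ both contain $I(M)$ for a non-singular submanifold $M$ and $J_1|_M=J_2|_M$, then $J_1=J_2$ --- for any $f\in J_1$, choose $g\in J_2$ with $f|_M=g|_M$; then $f-g\in I(M)\subseteq J_2$, so $f\in J_2$, and the reverse inclusion is symmetric.

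For the "only if" direction, I would start with a diffeomorphism-germ $\Phi$ of $(\mathbb{C}^m,0)$ realizing $\Phi^*I_2=I_1$ and $[\Phi^*\omega_2]_{I_1}=[\omega_1]_{I_1}$. Since $\Phi^*I_2\supseteq I(M_1)$, every $g\in I_2$ vanishes on $\Phi(M_1)$, so $\Phi(M_1)\subseteq M_2$; applying the same argument to $\Phi^{-1}$ together with $\dim M_1=\dim M_2$ yields $\Phi(M_1)=M_2$ as germs, so $\phi:=\Phi|_{M_1}$ is a diffeomorphism $M_1\to M_2$. The identity $\phi^*(I_2|_{M_2})=I_1|_{M_1}$ is then immediate from $\Phi^*I_2=I_1$, and the restricted algebraic restriction equality follows by restricting $[\Phi^*\omega_2]_{I_1}=[\omega_1]_{I_1}$ to $TM_1$ via Proposition \ref{reduction}, using $(\Phi^*\omega_2)|_{TM_1}=\phi^*(\omega_2|_{TM_2})$.

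For the "if" direction, given $\phi$ with the stated properties, I would extend $\phi$ to a diffeomorphism-germ $\Phi$ of $\mathbb{C}^m$ using adapted coordinates: choose $(x,y)$ with $M_1=\{y=0\}$ and $(x',y')$ with $M_2=\{y'=0\}$, write $\phi(x,0)=(\phi_1(x),0)$, and set $\Phi(x,y)=(\phi_1(x),y)$ (valid because $\dim y=\dim y'=m-k$). Then $\Phi^{-1}(M_2)=M_1$, so $\Phi^*I_2\supseteq I(M_1)$, and the computation $\Phi^*(I_2)|_{M_1}=\phi^*(I_2|_{M_2})=I_1|_{M_1}$ together with the elementary fact above forces $\Phi^*I_2=I_1$. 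Finally, the hypothesis on the restricted algebraic restrictions combined with Proposition \ref{reduction} in the reverse direction yields $[\Phi^*\omega_2]_{I_1}=[\omega_1]_{I_1}$.

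The only real subtlety is the first step of the "only if" direction, where one must argue that $\Phi$ sends $M_1$ exactly onto $M_2$ rather than just into it; this is precisely where the equal-dimensionality hypothesis enters. Everything else is essentially bookkeeping on top of Proposition \ref{reduction} and the elementary ideal observation.
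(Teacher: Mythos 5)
Your ``if'' direction is sound: extending $\phi$ to $\Phi(x,y)=(\phi_1(x),y)$ in adapted coordinates, checking $\Phi^{\ast}I_2\supseteq I(M_1)$ via $\Phi^{\ast}I(M_2)=I(M_1)$, and then upgrading $(\Phi^{\ast}I_2)|_{M_1}=I_1|_{M_1}$ to $\Phi^{\ast}I_2=I_1$ with your elementary observation is exactly the right bookkeeping, and Proposition \ref{reduction} finishes it. (The paper itself gives no proof, deferring to the analogous statement in \cite{DJZ2}.)

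The ``only if'' direction, however, has a genuine gap at precisely the point you flag as the ``only real subtlety.'' From $\Phi^{\ast}I_2=I_1\supseteq I(M_1)$ you conclude that every $g\in I_2$ vanishes on $\Phi(M_1)$; this has the containment backwards. The hypothesis $I\supseteq I(M)$ says the zero set of $I$ is \emph{contained in} $M$, not that elements of $I$ vanish on $M$: what follows is $I_2\supseteq I(\Phi(M_1))$, which gives no control on $\Phi(M_1)$. Concretely, take $I_1=I_2=\langle x,y\rangle$ in $\mathcal O(\mathbb C^2)$, $M_1=\{y=0\}$, $M_2=\{x=0\}$, $\Phi=\mathrm{id}$, $\omega_1=\omega_2$: all hypotheses hold, yet $\Phi(M_1)\not\subseteq M_2$. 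This is not an artifact of a degenerate example; even for the ideals actually used in the paper, such as $I=\langle y^2,z^4,x_1,\dots,x_{2n-2}\rangle$, the submanifold $M$ with $I(M)\subseteq I$ of minimal dimension is not unique ($\{x_1=\dots=x_{2n-2}=0\}$ and $\{x_1+y^2=x_2=\dots=x_{2n-2}=0\}$ both work), so a diffeomorphism preserving $I$ need not preserve any particular choice of $M$. What your argument is missing is the real content of the proposition: a lemma asserting that if a single ideal $I$ contains $I(M)$ and $I(M')$ for two equal-dimensional non-singular submanifolds, then the restricted algebraic restrictions $\bigl[\omega|_{TM}\bigr]_{I|_M}$ and $\bigl[\omega|_{TM'}\bigr]_{I|_{M'}}$ are diffeomorphic. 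One then applies this with $M=M_1$ and $M'=\Phi^{-1}(M_2)$ (the latter satisfies $I(\Phi^{-1}(M_2))=\Phi^{\ast}I(M_2)\subseteq I_1$ and has the right dimension), and composes with $\Phi|_{\Phi^{-1}(M_2)}$. Proving that lemma requires constructing a diffeomorphism carrying $M$ onto $M'$ that preserves $I$ and the algebraic restriction class --- this is the non-trivial step supplied by the corresponding result in \cite{DJZ2}, and it cannot be replaced by the assertion that $\Phi$ already maps $M_1$ onto $M_2$.
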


To calculate the space of algebraic restrictions of germs of $2$-forms we
will use the following obvious properties.

\begin{prop}\label{d-wedge}
If $\omega\in \mathcal A_0^k(I,\mathbb C^{2n})$ then $d\omega
\in \mathcal A_0^{k+1}(I,\mathbb C^{2n})$ and $\omega\wedge
\alpha \in \mathcal A_0^{k+p}(I,\mathbb C^{2n})$ for any germ of $\mathbb C$-analytic
$p$-form $\alpha$ on $\mathbb C^{2n}$.
\end{prop}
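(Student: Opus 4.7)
The plan is to unpack the definition of $\mathcal A_0^k(I,\mathbb C^{2n})$ and then verify the two claims by direct manipulation. Writing $\omega \in \mathcal A_0^k(I,\mathbb C^{2n})$ as $\omega = \gamma + d\beta$ with $\gamma \in I\Lambda^k(\mathbb C^{2n})$ and $\beta \in I\Lambda^{k-1}(\mathbb C^{2n})$, both parts reduce to recognizing an element of the desired $\mathcal A_0^{\bullet}(I,\mathbb C^{2n})$ on the nose.

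For the first assertion, I would simply compute $d\omega = d\gamma + d(d\beta) = d\gamma$. Since $\gamma \in I\Lambda^k(\mathbb C^{2n})$, the form $d\omega = 0 + d\gamma$ is a sum of an element of $I\Lambda^{k+1}(\mathbb C^{2n})$ (namely zero) and $d$ of an element of $I\Lambda^k(\mathbb C^{2n})$, so it belongs to $\mathcal A_0^{k+1}(I,\mathbb C^{2n})$ by definition.

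For the second assertion, let $\eta$ be a germ of $\mathbb C$-analytic $p$-form (I rename $\alpha$ to avoid clashing with the decomposition). Expanding,
\[
\omega \wedge \eta \;=\; \gamma \wedge \eta \;+\; d\beta \wedge \eta.
\]
The first summand lies in $I\Lambda^{k+p}(\mathbb C^{2n})$: writing $\gamma = \sum_i f_i\gamma_i$ with $f_i \in I$ and $\gamma_i \in \Lambda^k(\mathbb C^{2n})$, we get $\gamma \wedge \eta = \sum_i f_i (\gamma_i \wedge \eta)$. For the second summand, I would apply the graded Leibniz rule:
\[
d(\beta \wedge \eta) \;=\; d\beta \wedge \eta \;+\; (-1)^{k-1} \beta \wedge d\eta,
\]
which rearranges to
\[
d\beta \wedge \eta \;=\; (-1)^{k} \beta \wedge d\eta \;+\; d(\beta \wedge \eta).
\]
Here $\beta \wedge d\eta \in I\Lambda^{k+p}(\mathbb C^{2n})$ and $\beta \wedge \eta \in I\Lambda^{k+p-1}(\mathbb C^{2n})$ because $\beta \in I\Lambda^{k-1}(\mathbb C^{2n})$ and the ideal $I$ factors out through the wedge. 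Adding the two summands exhibits $\omega \wedge \eta$ in the form required by Definition of $\mathcal A_0^{k+p}(I,\mathbb C^{2n})$.

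There is no real obstacle: both statements follow mechanically from the definition of $\mathcal A_0^\bullet(I,\mathbb C^{2n})$ together with $d^2=0$ and the Leibniz rule. The only point worth being careful about is the sign in the Leibniz identity and the observation that $I\Lambda^q(\mathbb C^{2n})$ is stable under wedging with arbitrary forms, which is immediate since $I$ is an ideal in $\mathcal O(\mathbb C^{2n})$.
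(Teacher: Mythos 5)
Your proof is correct, and it is exactly the direct verification one would expect: the paper states Proposition \ref{d-wedge} as an ``obvious property'' without proof, and your unpacking of the definition, using $d^2=0$ for the first claim and the graded Leibniz rule (with the correct sign $(-1)^{k-1}$ for the $(k-1)$-form $\beta$) plus the ideal property of $I$ for the second, fills in precisely the intended argument. Nothing is missing.
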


Then we need to determine which algebraic restrictions of closed
$2$-forms are realizable by symplectic forms. This is possible due
to the following fact.

\begin{prop}\label{rank}
 Let $I$ be an ideal of $\mathcal O(\mathbb C^{2n})$. Let $r$ be
the minimal dimension of non-singular submanifolds $M$ of $\mathbb
C^{2n}$ such that $I$ contains the ideal $I(M)$.  The algebraic restriction $[\theta ]_I$ of the germ
of a closed $2$-form $\theta $ is realizable by the germ of a
symplectic form on $\mathbb C^{2n}$ if and only if $rank (\theta
\vert _{T_0M})\ge 2r - 2n$.
\end{prop}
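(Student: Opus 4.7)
The plan is to exploit the minimality of $\dim M = r$ to force $I|_M \subset \mathfrak{m}_{M,0}^2$, from which both directions of the equivalence will follow. First I would argue by contradiction: if some $f \in I$ had $d(f|_M)(0) \neq 0$, then by transversality $M' = M \cap \{f = 0\}$ is a non-singular submanifold of dimension $r-1$ with $I(M') = I(M) + \langle f \rangle \subset I$, contradicting the minimality of $r$. Once $I|_M \subset \mathfrak{m}_{M,0}^2$, any $\alpha \in I\Lambda^2(\mathbb C^{2n})$ vanishes at $0$, and for any $\beta \in I\Lambda^1(\mathbb C^{2n})$ the restriction $d\beta|_{T_0 M}$ also vanishes: writing $\beta|_{TM} = \sum f_i \gamma_i$ with $f_i \in I|_M \subset \mathfrak{m}_{M,0}^2$, both $f_i(0) = 0$ and $df_i(0) = 0$ kill every term of $d\beta|_{T_0 M} = \sum(df_i \wedge \gamma_i + f_i\, d\gamma_i)|_0$. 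Hence $\theta|_{T_0 M}$, and in particular its rank, is a well-defined invariant of the algebraic restriction class $[\theta]_I$.

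For the forward direction, suppose $\omega$ is a symplectic germ realizing $[\theta]_I$. By the previous paragraph, $\omega|_{T_0 M} = \theta|_{T_0 M}$. The kernel of $\omega|_{T_0 M}$ equals $T_0 M \cap (T_0 M)^{\omega}$, where the symplectic orthogonal $(T_0 M)^{\omega}$ has dimension $2n - r$, so $\dim \ker(\omega|_{T_0 M}) \leq 2n - r$ and $\mathrm{rank}(\theta|_{T_0 M}) = \mathrm{rank}(\omega|_{T_0 M}) \geq r - (2n - r) = 2r - 2n$.

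For the converse, assuming the rank inequality, I would build $\omega$ in two steps. Step one is a linear-algebraic extension: since $\dim \ker \theta|_{T_0 M} \leq 2n - r$, one can pair a basis of this kernel injectively with vectors in a complement of $T_0 M$ in $T_0 \mathbb C^{2n}$ and fill in a symplectic block on the remaining even-dimensional piece, obtaining a nondegenerate skew form $\omega_0$ on $T_0 \mathbb C^{2n}$ that restricts to $\theta|_{T_0 M}$ on $T_0 M$. Step two is a geometric extension: choose local coordinates with $M = \{x_{r+1} = \cdots = x_{2n} = 0\}$ and projection $\pi : \mathbb C^{2n} \to M$, then set
\[
\omega := \pi^*(\theta|_{TM}) + \omega_1,
\]
where $\omega_1$ is the constant $2$-form involving only $dx_{r+1}, \ldots, dx_{2n}$ chosen so that $\omega(0) = \omega_0$. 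This $\omega$ is closed and nondegenerate at $0$, hence a symplectic germ; since $\omega_1$ vanishes on $TM$, one has $\omega|_{TM} = \theta|_{TM}$, and Proposition \ref{reduction} then gives $[\omega]_I = [\theta]_I$.

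The step I expect to be the main obstacle is the minimality argument producing $I|_M \subset \mathfrak{m}_{M,0}^2$: it is what makes the rank of $\theta|_{T_0 M}$ an invariant of $[\theta]_I$ and simultaneously powers both implications, while the linear-algebraic and coordinate constructions are relatively routine once that is in hand.
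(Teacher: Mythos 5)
The paper does not actually prove Proposition \ref{rank}; it is stated as a ``modification of basic properties of algebraic restrictions'' imported from \cite{DJZ2}, so your proof can only be judged on its own merits. Your overall strategy is the standard one and is essentially sound: the minimality argument (cutting $M$ down by any $f\in I$ with $d(f|_M)(0)\neq 0$ to contradict minimality of $r$) correctly yields $I|_M\subset\mathfrak m_{M,0}^2$, which makes $\theta|_{T_0M}$ an invariant of $[\theta]_I$; the necessity direction via $\ker(\omega|_{T_0M})=T_0M\cap(T_0M)^{\omega}$ is correct; and the linear-algebraic extension of $\theta|_{T_0M}$ to a nondegenerate form $\omega_0$ by pairing a basis of the kernel with vectors in a complement is the right idea (the kernel has dimension $r-\mathrm{rank}\le 2n-r$, and the leftover piece is even-dimensional, so the construction goes through).

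There is, however, one concrete error in Step two of the sufficiency direction: you require $\omega_1$ to be a constant $2$-form ``involving only $dx_{r+1},\ldots,dx_{2n}$'' and simultaneously to satisfy $\omega(0)=\omega_0$. These two demands are incompatible whenever $\theta|_{T_0M}$ is degenerate, which is precisely the interesting case: with such an $\omega_1$ the matrix of $\omega(0)$ is block-diagonal, $\theta|_{T_0M}\oplus\omega_1$, and can never be nondegenerate, whereas your own $\omega_0$ from Step one necessarily contains the mixed pairings $\omega_0(e_i,w_j)=\delta_{ij}$ between $T_0M$ and its complement. The fix is immediate: take $\omega_1$ to be the constant $2$-form consisting of \emph{all} components of $\omega_0$ except the pure $T_0M\times T_0M$ block, i.e.\ allow monomials $dx_i\wedge dx_j$ with $i\le r<j$ as well as those with $r<i<j$. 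Every such monomial contains at least one $dx_j$ with $j>r$, so the property you actually use --- $\omega_1|_{TM}=0$, hence $\omega|_{TM}=\theta|_{TM}$ and $[\omega]_I=[\theta]_I$ by Proposition \ref{reduction} --- is preserved, $\omega$ is still closed, and now $\omega(0)=\omega_0$ is genuinely nondegenerate. With this correction the argument is complete.
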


\section{Discrete symplectic invariants.}\label{discrete}

We use discrete symplectic invariants to distinguish symplectic singularity classes. We modify definitions of these invariants introduced in \cite{DJZ2} for the symplectic $V$-equivalence.

 The first invariant is a symplectic
multiplicity (\cite{DJZ2}) introduced  in \cite{IJ1} as a
symplectic defect of a curve.

Let $f:(\mathbb C^{2n},0)\rightarrow (\mathbb C^{k},0)$ be the germ of a $\mathbb C$-analytic map on the symplectic space $(\mathbb C^{2n},\omega)$.
\begin{defn}
\label{def-mu}
 The {\bf symplectic multiplicity} $\mu_{sympl}(f)$ of  $f$ is the codimension of
 the symplectic $V$-orbit of $f$ in the $V$-orbit of $f$.
\end{defn}

The second invariant is the index of isotropy \cite{DJZ2}.

\begin{defn} \label{index}
The {\bf index of isotropy} $\iota(f)$ of $f=(f_1,\cdots,f_k)$ is the maximal
order of vanishing of the $2$-forms $\omega \vert _{TM}$ over all
smooth submanifolds $M$ such that the ideal $<f_1,\cdots,f_k>$ contains $I(M)$.
\end{defn}

These invariants can be described in terms of algebraic restrictions.

\begin{prop}[\cite{DJZ2}]\label{sm}
The symplectic multiplicity  of the germ of a quasi-homogeneous map $f=(f_1,\cdots,f_k)$ on the
symplectic space $(\mathbb C^{2n},\omega)$ is equal to the codimension of the orbit of the
algebraic restriction $[\omega ]_{<f_1,\cdots,f_k>}$ with respect to the group of
diffeomorphism-germs preserving the ideal $<f_1,\cdots,f_k>$  in the space of the algebraic
restrictions of closed  $2$-forms to $<f_1,\cdots,f_k>$.
\end{prop}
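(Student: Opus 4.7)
The plan is to identify both sides of the equality with the same tangent-space quotient via the map $X\mapsto L_X\omega$, with Lemma~\ref{Poincare} doing the essential work. First I would pass from the map $f$ to its ideal $I=\langle f_1,\dots,f_k\rangle$: since two $k$-tuples generate the same ideal iff they differ by a $GL(k,\mathbb C)$-valued matrix multiplication on the left, the $V$-orbit of $f$ is a $GL(k)$-bundle over the orbit $G\cdot I$ under the full diffeomorphism-germ group $G$, and the symplectic $V$-orbit is the same bundle over the $S$-orbit of $I$, where $S$ denotes the symplectomorphism group. Consequently $\mu_{sympl}(f)=\mathrm{codim}_{G\cdot I}(S\cdot I)$. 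Passing to tangent spaces at $I$, with stabilizer $H$ (the group of ideal-preserving diffeomorphisms, Lie algebra $\mathrm{Vect}_I=\{X:L_XI\subset I\}$), a standard quotient computation yields
\[
\mu_{sympl}(f)\;=\;\dim\frac{\mathrm{Vect}}{\mathrm{Vect}_I+\mathrm{Vect}_{\mathrm{sympl}}},\qquad \mathrm{Vect}_{\mathrm{sympl}}=\{X:L_X\omega=0\}.
\]

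Next I would rewrite the right-hand side. The ambient space of algebraic restrictions of closed $2$-forms to $I$ has tangent model $Z^2/(\mathcal A_0^2(I)\cap Z^2)$, while the tangent space to the $H$-orbit of $[\omega]_I$ is the image of $\mathrm{Vect}_I$ under $X\mapsto[L_X\omega]_I$. Hence the codimension in the proposition equals
\[
\dim\frac{Z^2}{\{L_X\omega:X\in\mathrm{Vect}_I\}+\mathcal A_0^2(I)\cap Z^2}.
\]
Using the linear isomorphism $\omega^\flat:\mathrm{Vect}\to\Omega^1$, $X\mapsto X\rfloor\omega$, the identity $L_X\omega=d(X\rfloor\omega)$, and the ordinary Poincar\'e lemma $Z^2=d\Omega^1$, the map $X\mapsto L_X\omega$ is a surjection $\mathrm{Vect}\twoheadrightarrow Z^2$ with kernel $\mathrm{Vect}_{\mathrm{sympl}}$, sending $\mathrm{Vect}_I$ onto $\{L_X\omega:X\in\mathrm{Vect}_I\}$. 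Therefore
\[
\frac{\mathrm{Vect}}{\mathrm{Vect}_I+\mathrm{Vect}_{\mathrm{sympl}}}\;\cong\;\frac{Z^2}{\{L_X\omega:X\in\mathrm{Vect}_I\}}.
\]

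The two displayed quotients for $\mu_{sympl}(f)$ will then agree provided $\mathcal A_0^2(I)\cap Z^2\subset\{L_X\omega:X\in\mathrm{Vect}_I\}$, which is the only non-formal step and the main obstacle. A closed element of $\mathcal A_0^2(I)$ has the form $\alpha+d\beta$ with $\alpha\in I\Lambda^2$ closed and $\beta\in I\Lambda^1$; applying Lemma~\ref{Poincare} to $\alpha$ produces $\gamma\in I\Lambda^1$ with $\alpha=d\gamma$, so the whole form is $d\delta$ with $\delta:=\gamma+\beta\in I\Lambda^1$. Setting $X:=(\omega^\flat)^{-1}(\delta)$, the coefficients of $X$ lie in $I$, whence $X\in I\cdot\mathrm{Vect}\subset\mathrm{Vect}_I$ and $L_X\omega=d\delta$, as desired. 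The crucial point is the invocation of the relative Poincar\'e lemma for quasi-homogeneous ideals; without quasi-homogeneity one would pick up the extra cohomological invariant discussed in \cite{DJZ1,DJZ2}.
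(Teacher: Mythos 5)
The paper itself gives no proof of this proposition --- it is quoted from \cite{DJZ2} with the blanket remark that the proofs are small modifications of the ones there --- so there is nothing to compare line by line. Your argument is correct and is essentially that standard proof: the reduction from map-germs to their ideals, the tangent-space computation of both codimensions, the identification via $X\mapsto L_X\omega=d(X\rfloor\omega)$ using that $\omega^\flat$ is an $\mathcal O$-linear isomorphism, and, as the one substantive step, the use of Lemma~\ref{Poincare} (which is exactly where quasi-homogeneity enters) to show that every closed form in $\mathcal A^2_0(I,\mathbb C^{2n})$ equals $d\delta$ for some $\delta\in I\Lambda^1(\mathbb C^{2n})$ and hence equals $L_X\omega$ with $X\in I\cdot\mathrm{Vect}\subset\mathrm{Vect}_I$.
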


\begin{prop}[\cite{DJZ2}]\label{ii}
The index of isotropy  of the germ of a quasi-homogeneous map $f=(f_1,\cdots,f_k)$ on the
symplectic space $(\mathbb C^{2n}, \omega )$ is equal to the
maximal order of vanishing of closed $2$-forms representing the
algebraic restriction $[\omega ]_{<f_1,\cdots,f_k>}$.
\end{prop}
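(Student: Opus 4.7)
The plan is to prove the stated equality by establishing two inequalities. Write $\kappa$ for the supremum of the orders of vanishing at $0$ of closed $2$-forms $\omega'$ on $\mathbb C^{2n}$ with $[\omega']_I = [\omega]_I$, where $I = \langle f_1,\dots,f_k\rangle$.

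For the inequality $\iota(f)\le\kappa$, I would take a smooth submanifold $M$ with $I(M)\subset I$ such that $\omega|_{TM}$ vanishes to order $j$ at $0$. In adapted local coordinates $(x_1,\dots,x_r,y_1,\dots,y_s)$ with $M=\{y=0\}$ and $I(M)=\langle y_1,\dots,y_s\rangle$, the form $\omega':=\pi^*(\omega|_{TM})$, where $\pi(x,y)=(x,0)$, is closed and vanishes to order $j$ at $0$. Expanding $\omega-\omega'$ in these coordinates, each summand either has a coefficient in $\langle y_1,\dots,y_s\rangle$ or contains a factor $dy_j$; the identity $dy_j\wedge\alpha = d(y_j\alpha) - y_j\,d\alpha$ rewrites every term of the latter type inside $d(I(M)\Lambda^1) + I(M)\Lambda^2$, so $\omega-\omega'\in\mathcal A_0^2(I(M),\mathbb C^{2n})\subset\mathcal A_0^2(I,\mathbb C^{2n})$. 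Hence $[\omega']_I=[\omega]_I$ and $\kappa\ge j$.

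For the reverse inequality $\kappa\le\iota(f)$, suppose $\omega'$ is a closed representative of $[\omega]_I$ vanishing to order $m$ at $0$; Lemma \ref{Poincare} applied to the closed difference (which lies in $\mathcal A_0^2(I,\mathbb C^{2n})$) yields $\omega=\omega'+d\beta$ for some $\beta\in I\Lambda^1$. I would then introduce the finitely generated quasi-homogeneous ideal $\widetilde I:=I+\mathfrak m^{m+1}$ and show that $[\omega]_{\widetilde I}=0$: decompose $\omega'=\omega'_m+\omega'_{>m}$ into its standard-degree-$m$ homogeneous part and a higher-order remainder, note that $\omega'_m$ is a closed polynomial $2$-form homogeneous of degree $m$, and apply the Poincar\'e homotopy $\eta:=(m+2)^{-1}E\rfloor\omega'_m$ with $E=\sum x_i\partial_{x_i}$; since $\mathcal L_E\omega'_m=(m+2)\omega'_m$ and $\omega'_m$ is closed, this gives $d\eta=\omega'_m$ with $\eta\in\mathfrak m^{m+1}\Lambda^1$. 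Because $\omega'_{>m}\in\mathfrak m^{m+1}\Lambda^2\subset\widetilde I\Lambda^2$ and $d\beta\in d(I\Lambda^1)\subset d(\widetilde I\Lambda^1)$, the decomposition $\omega=d\eta+\omega'_{>m}+d\beta$ places $\omega$ in $\mathcal A_0^2(\widetilde I)$.

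Applying Theorem \ref{thm B} to the quasi-homogeneous ideal $\widetilde I$ would then furnish a smooth Lagrangian $L$ with $I(L)=\langle h_1,\dots,h_n\rangle\subset\widetilde I$; writing each generator $h_j=h_j^I+h_j^{(m+1)}$ with $h_j^I\in I$ and $h_j^{(m+1)}\in\mathfrak m^{m+1}$, I would set $M:=V(\langle h_1^I,\dots,h_n^I\rangle)$. Since $dh_j^I(0)=dh_j(0)$ are linearly independent, $M$ is a smooth $n$-dimensional submanifold and $I(M)=\langle h_j^I\rangle\subset I$. Choosing coordinates $(x,\widetilde y)$ adapted to $L$ so that $L=\{\widetilde y=0\}$, the implicit function theorem exhibits $M$ as a graph $\widetilde y=g(x)$ with $g=O(|x|^{m+1})$, and the Lagrangian condition $\omega|_{TL}=0$ forces the pure $dx_i\wedge dx_j$ coefficients of $\omega$ in the coordinates $(x,\widetilde y)$ to lie in $\langle\widetilde y_1,\dots,\widetilde y_n\rangle$; a direct Taylor expansion then yields $\omega|_{TM}=O(|x|^m)$, proving $\iota(f)\ge m$. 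The key and most delicate step is the Poincar\'e primitive argument placing $\eta$ inside $\mathfrak m^{m+1}\Lambda^1$, since this is precisely what converts the algebraic condition $[\omega]_{\widetilde I}=0$ into the geometric existence of a submanifold $M$ on which $\omega$ vanishes to order $m$.
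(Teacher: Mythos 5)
Your argument is correct, but there is nothing in the paper to compare it against: the paper states this proposition with a citation to \cite{DJZ2} and gives no proof, so your write-up is a genuinely self-contained reconstruction. The easy inequality (extending $\omega\vert_{TM}$ by the projection $\pi^*$ and absorbing $\omega-\pi^*(\omega\vert_{TM})$ into $\mathcal A^2_0(I(M))\subset\mathcal A^2_0(I)$) is exactly the standard mechanism behind Proposition \ref{reduction}, and the converse via the auxiliary ideal $\widetilde I=I+\mathfrak m^{m+1}$ --- which is again finitely generated and quasi-homogeneous, so that Theorem \ref{thm B} applies and produces a Lagrangian $L$ whose generators can be corrected modulo $\mathfrak m^{m+1}$ to generators of an honest $M$ with $I(M)\subset I$ --- is a clean way to convert ``zero algebraic restriction up to order $m$'' into a geometric submanifold; the final Taylor estimate $\omega\vert_{TM}=O(|x|^m)$ for the graph $\widetilde y=g(x)$, $g=O(|x|^{m+1})$, is right. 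Three small points worth tidying: (i) the invocation of Lemma \ref{Poincare} is superfluous, since $\omega-\omega'\in\mathcal A^2_0(I)\subset\mathcal A^2_0(\widetilde I)$ already in the form $I\Lambda^2+d(I\Lambda^1)$; (ii) the claim that $dh_j^I(0)=dh_j(0)$ needs $m\ge 1$ (so that $h_j^{(m+1)}\in\mathfrak m^2$), but the case $m=0$ is vacuous since every order of vanishing is $\ge 0$; (iii) if the index is infinite one should phrase both directions as ``for every finite $m$'' so that the two suprema agree. None of these affects the validity of the proof.
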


We will use these invariants to distinguish symplectic singularities.
\section{$V$-simple maps}\label{V-simple}

We recall some results on classification of $V$-simple germs (for details see
\cite{AVG}).
\begin{defn}
The germ $f:(\mathbb C^m,0)\rightarrow (\mathbb C^n,0)$ is said be
{\bf $V$-simple} if its $k$-jet, for any $k$, has a neighborhood
in the small jet space $J^k_{0,0}(\mathbb C^m, \mathbb C^n)$ that
intersects only a finite number of $V$-equivalence classes
(bounded by a constant independent of $k$).
\end{defn}

\begin{defn}
The {\bf $p$-parameter suspension of the map-germ} $f:(\mathbb
C^m,0)\rightarrow (\mathbb C^n,0)$ is the map germ
$$
F:(\mathbb C^m\times \mathbb C^p,0)\ni (y,z)\mapsto (f(y),z)\in
(\mathbb C^n\times \mathbb C^p,0).
$$
\end{defn}

\begin{thm}[see \cite{AVG}]
The V-simple map-germs $(\mathbb C^m,0)\rightarrow (\mathbb C^n,0)$
with $m\ge n$ belong, up to $V$-equivalence and suspension, to one
of the three lists: the $A-D-E$ singularities of map-germs $\mathbb C^m\rightarrow \mathbb C$ (hypersurfaces with an isolated singularity), $S-T-U-W-Z$ singularities of map-germs $\mathbb C^3\rightarrow \mathbb C^2$ ($1$-dimensional ICISs) and singularities of map-germs $\mathbb C^2\rightarrow \mathbb C^2$ ($0$-dimensional ICISs) presented in Table \ref{I}.
\end{thm}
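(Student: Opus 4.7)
The plan is to reduce the classification to a problem on finite jet spaces and then enumerate simple classes by induction on the complexity of the jet, treating each target dimension separately.

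First I would apply a splitting lemma for $V$-equivalence: if $f:(\mathbb C^m,0)\to(\mathbb C^n,0)$ has rank $r$ at $0$, then $f$ is $V$-equivalent to the $r$-parameter suspension of a rank-zero germ $g:(\mathbb C^{m-r},0)\to(\mathbb C^{n-r},0)$. This absorbs the suspension ambiguity in the statement and reduces matters to rank-zero germs. Moreover, a rank-zero simple $V$-germ is finitely $V$-determined (by Mather's criterion for $\mathcal{K}$-finiteness, which coincides with $V$-finiteness), so for each $(m,n)$ one may truncate and work on the finite-dimensional jet space $J^k_{0,0}(\mathbb C^m,\mathbb C^n)$ for $k$ large; the group of $V$-jet equivalences acts algebraically, hence a jet is $V$-simple precisely when its orbit closure has trivial transverse modality.

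Next I would split the enumeration by target dimension. For $n=1$ and $m$ arbitrary, the target is scalar, so $V$-equivalence of functions agrees with contact equivalence of hypersurface germs, and Arnold's classical classification gives exactly the $A$-$D$-$E$ series as the simple list, with the unimodal families $P_8$, $X_9$, $J_{10}$ forming the adjacency boundary. For $n=2$ one reduces (after splitting) to $m=3$, where the $2$-jet is a pair of quadratic forms in three variables; classifying pairs of conics up to the action of $GL(2,\mathbb C)\times GL(3,\mathbb C)$ produces a short list of simple $2$-jets, and for each such $2$-jet a complete-transversal argument in the jet space enumerates the finitely many simple extensions, yielding the $S$-$T$-$U$-$W$-$Z$ families of $1$-dimensional ICIS. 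For $m=n=2$ the rank-zero $2$-jet is a pair of plane conics, and a similar complete-transversal sweep produces exactly the $I$-list of Table \ref{I}.

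The main technical obstacle is demonstrating completeness of each list, i.e.\ showing that no $2$-jet outside the enumerated families admits only simple extensions. The standard device is Mather's infinitesimal criterion: the tangent space $T_f(V\cdot f)$ to the $V$-orbit is spanned by derivatives of $f$ along arbitrary vector fields in the source together with the module $f^*\mathcal{O}_n\cdot f$, and a jet is modal when this tangent space has infinite codimension in the corresponding jet module. For each excluded $2$-jet one exhibits an explicit continuous modulus (for $n=1$, the cross-ratio of the roots of the first non-degenerate jet; for the ICIS cases, an analogous projective invariant of the associated pencil of quadrics) to force modality at least one, thereby closing the classification.
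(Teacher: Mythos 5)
This statement is not proved in the paper at all: it is quoted as a known classification, with the proof deferred to \cite{AVG} and the normal forms of Table \ref{I} attributed to Giusti \cite{G}. So there is no internal argument to compare against; your outline can only be measured against the standard proofs in the literature, and as such it is a correct description of the strategy (splitting lemma for contact equivalence, finite determinacy of simple germs, jet-by-jet enumeration via complete transversals, and explicit moduli to certify non-simplicity) but it is not a proof. The decisive step in each case --- ``a complete-transversal argument enumerates the finitely many simple extensions, yielding the $S$-$T$-$U$-$W$-$Z$ families'' and ``a similar sweep produces exactly the $I$-list'' --- simply restates the conclusion of the theorem. The entire content of the classification lives in those enumerations (which $2$-jets are simple, which extensions of each stay simple, and where the first modulus appears), and none of that is carried out.

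There is also a concrete logical gap in your reduction of the cases. The splitting lemma only strips off the rank of $df_0$; it does not reduce an arbitrary pair $(m,n)$ to the three listed situations. A rank-zero germ $(\mathbb C^4,0)\rightarrow(\mathbb C^2,0)$ is not a suspension of a germ into $\mathbb C^3$, and rank-zero germs $(\mathbb C^m,0)\rightarrow(\mathbb C^n,0)$ with $n\ge 3$ (for instance $0$-dimensional ICISs of embedding dimension $3$) are not touched by your case split at all. Part of Giusti's theorem is precisely that no $V$-simple classes occur there --- one must exhibit moduli already at the level of the $2$-jet (an $n$-tuple of quadratic forms in $m$ variables under $GL(n,\mathbb C)\times GL(m,\mathbb C)$ has positive-dimensional moduli once $n$ and $m-n$ are large enough) or show adjacency to a non-simple germ. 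Without that exclusion, your argument does not establish that the three lists exhaust all $V$-simple germs. A minor additional slip: for $m=n=2$ the rank-zero $2$-jet is a pair of binary quadratic forms, not a pair of plane conics.
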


\begin{center}
\begin{table}[h]

    \begin{small}
    \noindent
    \begin{tabular}{|c|c|c|}

            \hline
    Notation &  Normal form & Restrictions  \\ \hline

     $I_{a,b}$ & $(yz, y^a+z^b$) & $a\ge b\ge 2$ \\ \hline

$I_{2a+1}$ & $(y^2+z^3, z^a$) & $a\ge 3$ \\ \hline

$I_{2a+4}$ &$(y^2+z^3, yz^a)$ & $a\ge 2$ \\ \hline

$I_{a+5}$ & $(y^2+z^a, yz^2)$ & $a\ge 4$ \\ \hline

$I^*_{10}$ & $(y^2, z^4$) & - \\ \hline
\end{tabular}

\smallskip

\caption{\small V-simple map-germs $\mathbb C^2\rightarrow \mathbb
C^2$. }\label{I}

\end{small}
\end{table}
\end{center}
The normal forms in Table \ref{I} were obtained  in \cite{G} by M. Giusti.
\section{Symplectic $0$-dimensional ICISs}\label{symplICIS}

We use the method of algebraic restrictions to obtain a complete classification of
singularities presented in Table \ref{I}.

\begin{thm}\label{s-main}
Any map-germ $(\mathbb C^{2n},0)\rightarrow (\mathbb C^{2n},0)$ from the symplectic space
$(\mathbb
C^{2n},\sum_{i=1}^n dp_i\wedge dq_i)$ which is $V$-equivalent (up to a suitable suspension)
to one of the normal forms in Table \ref{I} is symplectically $V$-equivalent to one and only one of the following
normal forms presented in Table \ref{sI}

\begin{center}
\begin{table}[h]

    \begin{small}
    \noindent
    \begin{tabular}{|c|c|c|c|c|}

            \hline
    Symplectic class &   Normal forms    & cod & $\mu_{sympl}$ &  i  \\ \hline
  $I_{a,b}^0$, $(n\ge 1)$ & $(p_1q_1,p_1^a+q_1^b,p_2,q_2,\cdots,p_n,q_n)$  &  $0$ & $0$ & $0$  \\  \hline

  $I_{a,b}^1$, $(n\ge 2)$  & $(p_1p_2,p_1^a+p_2^b,q_1,q_2,p_3,q_3,\cdots,p_n,q_n)$ &  $1$ & $1$ & $\infty$ \\ \hline
  \hline
$I_{2a+1}^0$, $(n\ge 1)$& $(p_1^2+q_1^3,q_1^a,p_2,q_2,\cdots,p_n,q_n)$
              & $0$ & $0$ & $0$     \\ \hline

   $I_{2a+1}^1$, $(n\ge 2)$& $(p_1^2+p_2^3,p_2^a,q_1,q_2+p_1p_2,p_3,q_3,\cdots,p_n,q_n)$
                                         &$1$  & $1$ & $1$ \\ \hline

    $I_{2a+1}^2$,  $(n\ge 2)$ & $(p_1^2+p_2^3,p_2^a,q_1,q_2,p_3,q_3,\cdots,p_n,q_n)$
                                         &$2$  & $2$ & $\infty$ \\ \hline \hline
$I_{2a+4}^0$, $(n\ge 1)$& $(p_1^2+q_1^3,p_1q_1^a,p_2,q_2,\cdots,p_n,q_n)$
              & $0$ & $0$ & $0$     \\ \hline
$I_{2a+4}^1$, $(n\ge 2)$& $(p_1^2+p_2^3,p_1p_2^a,q_1,q_2+p_1p_2,p_3,q_3,\cdots,p_n,q_n)$
              & $1$ & $1$ & $1$     \\ \hline
$I_{2a+4}^2$, $(n\ge 2)$& $(p_1^2+p_2^3,p_1p_2^a,q_1,q_2,p_3,q_3,\cdots,p_n,q_n)$
              & $2$ & $2$ & $\infty$     \\ \hline  \hline
$I_{a+5}^0$, $(n\ge 1)$& $(p_1^2+q_1^a,p_1q_1^2,p_2,q_2,\cdots,p_n,q_n)$
              & $0$ & $0$ & $0$     \\ \hline
$I_{a+5}^1$, $(n\ge 2)$& $(p_1^2+p_2^a,p_1p_2^2,q_1,q_2+p_1p_2,p_3,q_3,\cdots,p_n,q_n)$
              & $1$ & $1$ & $1$     \\ \hline
$I_{a+5}^1$, $(n\ge 2)$& $(p_1^2+p_2^a,p_1p_2^2,q_1,q_2,p_3,q_3,\cdots,p_n,q_n)$
              & $2$ & $2$ & $\infty$     \\ \hline   \hline
$I_{10}^{\ast 0}$, $(n\ge 1)$& $(p_1^2,q_1^4,p_2,q_2,\cdots,p_n,q_n)$
              & $0$ & $0$ & $0$     \\ \hline
$I_{10}^{\ast 1}$, $(n\ge 2)$& $(p_1^2,p_2^4,q_1,q_2+p_1p_2,p_3,q_3,\cdots,p_n,q_n)$
              & $1$ & $1$ & $1$     \\ \hline
$I_{10}^{\ast 2}$, $(n\ge 2)$& $(p_1^2,p_2^4,q_1,q_2+p_1p_2^2,p_3,q_3,\cdots,p_n,q_n)$
              & $2$ & $2$ & $2$     \\ \hline
$I_{10}^{\ast 3}$, $(n\ge 2)$& $(p_1^2,p_2^4,q_1,q_2,p_3,q_3,\cdots,p_n,q_n)$
& $3$ & $3$ & $\infty$     \\ \hline
\end{tabular}

\caption{\small Classification of symplectic $0$-dimensional isolated complete intersection singularities,
$cod$ -- codimension of the classes; \ $\mu_{sympl}$-- symplectic multiplicity; \ $i$ --  index of isotropy.}\label{sI}

\end{small}
\end{table}
\end{center}

\end{thm}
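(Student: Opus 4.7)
The plan is to apply the method of algebraic restrictions from Section \ref{method}. All five normal forms in Table \ref{I} are quasi-homogeneous, and their $(2n-2)$-fold suspension gives a quasi-homogeneous ideal $I = \langle f_1, f_2, x_3, \ldots, x_{2n}\rangle \subseteq \mathcal O(\mathbb C^{2n})$. By Theorem \ref{thm A}(2) the symplectic $V$-classification is equivalent to the classification of the algebraic restriction $[\omega]_I$ under the group of diffeomorphisms preserving $I$. Since $I$ contains $\langle x_3,\ldots,x_{2n}\rangle = I(M_0)$ for $M_0 = \mathbb C^2 \times \{0\}$, Proposition \ref{main-reduction} reduces the problem to classifying the algebraic restriction on $\mathbb C^2$ of $\omega|_{TM_0} = dp_1\wedge dq_1$ to the planar ideal $I_0 = \langle f_1(y,z), f_2(y,z)\rangle$.

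On $\mathbb C^2$, a direct computation using $\mathcal A^2_0(I_0, \mathbb C^2) = I_0\Lambda^2 + d(I_0\Lambda^1)$ and the fact that every $1$-form is $h_1\,dy + h_2\,dz$ identifies the space of algebraic restrictions of $2$-forms with the quotient algebra $\mathcal O_2/(I_0 + J_f)$, where $J_f = \langle \partial_y f_i, \partial_z f_i\rangle$ is the Jacobian ideal. For each of the five cases an explicit calculation gives $\mathcal O_2/(I_0 + J_f) = \mathbb C$ for $I_{a,b}$, $\mathcal O_2/(y, z^2)$ (two-dimensional) for $I_{2a+1}$, $I_{2a+4}$, $I_{a+5}$, and $\mathcal O_2/(y, z^3)$ (three-dimensional) for $I^*_{10}$. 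Hence every non-zero algebraic restriction has a representative of the form $(c_0 + c_1 z + c_2 z^2)\,dy\wedge dz$ with only finitely many coefficients, and the total number of orbits to identify is $\dim(\mathcal O_2/(I_0+J_f)) + 1 \in \{2, 3, 4\}$, matching the count in Table \ref{sI}.

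To classify orbits in $\mathcal O_2/(I_0 + J_f)$ I would combine two kinds of infinitesimal symmetries preserving $I_0$: the Euler field $E = \lambda_1 y\partial_y + \lambda_2 z\partial_z$ coming from quasi-homogeneity, which rescales the monomials $z^k\,dy\wedge dz$ by different weights, together with additional $I_0$-preserving fields obtained by solving $X(f_i) \in I_0$ in each case. These let one reduce every non-zero algebraic restriction to a single weighted-homogeneous monomial $z^k\,dy\wedge dz$, with orbits separated by $k$ (equal to the index of isotropy). Proposition \ref{rank} then decides realisability: since none of the five planar ideals contains a linear form, one has $r = 2$ and the rank condition reads $\mathrm{rank}(\theta|_{T_0M_0}) \ge 4-2n$; this forces nondegeneracy for $n=1$, leaving only the $k=0$ orbit (the class $I_{*,*}^0$), and is vacuous for $n \ge 2$, producing the complete list in Table \ref{sI}.

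Finally, for each realisable orbit I would realise the chosen monomial representative by an explicit closed $2$-form $\omega'$ on $\mathbb C^{2n}$ with $[\omega']_I = [\omega]_I$, apply Theorem \ref{thm A}(1) to produce a diffeomorphism $\Phi$ with $\Phi^*\omega = \omega'$, and use $\Phi$ to transport the original $V$-normal form $f$ to the symplectic $V$-normal form in Table \ref{sI}; the mixing terms $q_2 + p_1 p_2$ and $q_2 + p_1 p_2^2$ in the $I^*_{10}$ classes arise precisely as the coordinate changes needed to absorb the difference between $\omega'$ and $\sum dp_i\wedge dq_i$. The entries in the columns $cod$, $\mu_{sympl}$, $i$ then follow from Propositions \ref{sm} and \ref{ii}, which confirm the orbits are distinct. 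The main technical obstacle is the orbit calculation for $I^*_{10}$, where the three-dimensional quotient algebra requires an explicit description of the Lie algebra $\{X : X(I_0) \subseteq I_0\}$ modulo $I_0 \cdot \mathrm{Der}(\mathcal O_2)$, and a careful argument that this Lie algebra supplies enough infinitesimal symmetries to absorb every higher-weighted monomial into the leading one while no further symmetry merges two distinct values of $k$.
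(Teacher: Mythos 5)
Your proposal is correct and follows essentially the same route as the paper: reduction to the planar ideal, computation of the finite-dimensional space of algebraic restrictions of $2$-forms (your quotient $\mathcal O_2/(I_0+J_f)$ is just a systematic repackaging of the paper's computation via Proposition \ref{d-wedge}, and your dimensions $1$, $2$, $2$, $2$, $3$ match the paper's bases $\{[dy\wedge dz]_J,[z\,dy\wedge dz]_J,\dots\}$), orbit classification by $I_0$-preserving symmetries (the paper uses explicit diffeomorphisms for $I^*_{10}$ and an Euler-field flow for $I_{a+5}$), realizability via Proposition \ref{rank} with $r=2$, and distinction of orbits by the index of isotropy. The only cosmetic difference is that the paper disposes of the case $n=1$ by citing the volume-preserving result of Domitrz--Rieger rather than by the rank argument alone.
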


\begin{proof} In the case $n=1$ the proof follows from results in \cite{DR} where it was proved that for quasi-homogeneous singularities in the $\mathbb C$-analytic category  $V$-orbits coincide with volume-preserving $V$-orbits. For general $n$ we present the proof in the case of the $I_{10}^{\ast}$ singularity where there are $4$ different symplectic singularity classes, and in the case of the $I_{a+5}$ singularity. The proofs in other cases are very similar.

For the $I_{10}^{\ast}$ singularity  we calculate the space of algebraic restrictions of $2$-forms to the ideal $I=<y^2,z^4,x_1,\cdots,x_{2n-2}>$. The ideal generated by $x_1,\cdots,x_{2n-2}$ is contained in $I$. So by Proposition \ref{reduction} we may consider the following ideal $J=I|_{\{x_1=\cdots=x_{2n-2}=0\}}=<y^2,z^4>$ in the ring $\mathcal O(\mathbb C^2)$. By Proposition \ref{d-wedge}  germs of $1$-forms $d(1/2y^2)=ydy, d(1/4z^4)=z^3dz$   and germs of $2$-forms $ydy\wedge dz, z^3dy\wedge dz$ have zero algebraic restriction to $J$. So any algebraic restriction of the germ of a closed $2$-forms to $J$ can be  presented in  the following form $[\omega]_J=A[dy\wedge dz]_J +B[zdy \wedge dz]_J+C[z^2dy\wedge dz]_J$, where $A,B,C\in \mathbb C$.

If $A\ne 0$ then we obtain $\Phi^{\ast}[\omega]_J=[dy\wedge dz]_J$ by the diffeomorphism-germ of the form $\Phi(y,z)=(y,z(A+1/2Bz+1/3Cz^2))$.
If $A=0$ and $B\ne 0$ then we obtain $\Phi^{\ast}[\omega]_J=[zdy\wedge dz]_J$ by the diffeomorphism-germ of the form $\Phi(y,z)=(y,z\phi(z))$, where $\phi^2(z)=B+2/3Cz$.
If $A=B=0$ and $C\ne0$ then we obtain $\Phi^{\ast}[\omega]_J=[z^2dy\wedge dz]_J$ by the diffeomorphism-germ of the form $\Phi(y,z)=(Cy,z)$.

Since the minimal dimension $r$ of the germ of a non-singular submanifold $M$ such that $I(M)\subset I$ is $2$ then by Proposition \ref{rank} for $n=1$ only the algebraic restriction $[dy\wedge dz]_I$ is realizable by the germ of a symplectic form.

For $n>1$ all algebraic restrictions are realizable by the following symplectic forms:
\begin{equation}\label{non-zero}
dy \wedge dz + \sum_{i=1}^{n-1}dx_{2i-1}\wedge dx_{2i},
\end{equation}
\begin{equation}\label{z}
zdy \wedge dz + dy\wedge dx_1+dz\wedge dx_2+\sum_{i=2}^{n-1}dx_{2i-1}\wedge dx_{2i},
\end{equation}
\begin{equation}\label{z2}
z^2dy \wedge dz + dy\wedge dx_1+dz\wedge dx_2+\sum_{i=2}^{n-1}dx_{2i-1}\wedge dx_{2i},
\end{equation}
\begin{equation}\label{zero}
dy\wedge dx_1+dz\wedge dx_2+\sum_{i=2}^{n-1}dx_{2i-1}\wedge dx_{2i}.
\end{equation}
By a simple change of coordinates we obtain the normal forms in Table \ref{sI}.

For the $I_{a+5}$ singularity the space algebraic restrictions of germs of closed $2$-forms to the ideal $I=<y^2+z^a,yz^2,x_1,\cdots,x_{2n-2}>$ can calculated in the same way. We obtain that any algebraic restriction of the germs of a closed $2$-forms on $\mathbb C^2=\{x_1=\cdots=x_{2n-2}=0\}$ to $J=I|_{\{x_1=\cdots=x_{2n-2}=0\}}=<y^2+z^a,yz^2>$ can be  presented in  the following form
\begin{equation}\label{al-r}
[\omega]_J=A[dy\wedge dz]_J +B[zdy \wedge dz]_J,
\end{equation}
where $A,B\in \mathbb C$.

First assume that $A\ne 0$.
Let $E$ denote the germ of the Euler vector field $ay\frac{\partial}{\partial y}+2z\frac{\partial}{\partial y}$. Then it is easy to check that a flow $\Phi_t$ of the germ of a vector field $X=\frac{B}{(a+4)A}zE$ preserves $J$, $\mathcal L_{X}(Ady\wedge dz)=Bz dy\wedge dz$, $[\mathcal L_{X}(Bzdy\wedge dz)]_J=0$. Therefore $\Phi_t^{\ast}[Ady\wedge dz + t Bz dy\wedge dz]_J=[Ady\wedge dz]_J$ for $t\in [0;1]$ (see \cite{D}).
Finally by a linear change of coordinates of the form $(y,z)\mapsto (Cy,Dz)$, where for $C,D\in \mathbb C$ such that $C^2=D^a$ and $CD=A$ we show that if $A\ne 0$ then the algebraic restriction (\ref{al-r}) is diffeomorphic to $[dy\wedge dz]_J$. By a similar  change of coordinates preserving $J$ we show that if $A=0$ and $B\ne 0$ then the algebraic restriction (\ref{al-r}) is diffeomorphic to $[zdy\wedge dz]_J$. As in the previous case, for $n=1$ only $[dy\wedge dz]_I$ can be realizable by the germ of a symplectic form . For $n\ge 2$ algebraic restrictions are realizable by (\ref{non-zero}), (\ref{z}) and (\ref{zero}). Normal forms in Table \ref{sI} are obtained by an obvious change of coordinates.
\end{proof}

\bibliographystyle{amsalpha}

\end{document}